\DeclareMathOperator{\supp}{supp}
\theoremstyle{plain}
\newtheorem{theorem}{Theorem}[section]
\newtheorem{lemma}{Lemma}[section]
\newtheorem{corol}{Corollary}[theorem]
\theoremstyle{definition}
\newtheorem{definition}{Definition}[section]
\newtheorem{remark}{\textnormal{\textbf{Remark}}}
\theoremstyle{remark}
\numberwithin{equation}{section}
\numberwithin{equation}{section}
\begin{document}

\title[The geometry of two-valued subsets of $L_{p}$-spaces]{The geometry of two-valued subsets of $L_{p}$-spaces}

\author{Anthony Weston}

\newcommand{\acr}{\newline\indent}

\address{Department of Mathematics and Statistics\acr
         Canisius College\acr
         Buffalo, NY 14208\acr
         USA}
\email{westona@canisius.edu}
\address{Department of Decision Sciences\acr
         University of South Africa\acr
         PO Box 392, UNISA 0003\acr
         SOUTH AFRICA}
\email{westoar@unisa.ac.za}

\subjclass[2010]{Primary 46B04; Secondary 46B85}
\keywords{$L_{p}$-space, (strict) negative type, isometry}

\begin{abstract}
Let $\mathcal{M}(\Omega, \mu)$ denote the algebra of all scalar-valued measurable functions
on a measure space $(\Omega, \mu)$. Let $B \subset \mathcal{M}(\Omega, \mu)$ be a set of finitely
supported measurable functions such that the essential range of each $f \in B$ is a subset of
$\{ 0,1 \}$. The main result of this paper shows that for any $p \in (0, \infty)$, $B$ has
strict $p$-negative type when viewed as a metric subspace of $L_{p}(\Omega, \mu)$ if and only
if $B$ is an affinely independent subset of $\mathcal{M}(\Omega, \mu)$ (when $\mathcal{M}(\Omega, \mu)$
is considered as a real vector space). It follows that every two-valued (Schauder) basis of
$L_{p}(\Omega, \mu)$ has strict $p$-negative type. For instance, for each $p \in (0, \infty)$,
the system of Walsh functions in $L_{p}[0,1]$ is seen to have strict $p$-negative type. The
techniques developed in this paper also provide a systematic way to construct, for any $p \in (2, \infty)$,
subsets of $L_{p}(\Omega, \mu)$ that have $p$-negative type but not $q$-negative type for any $q > p$.
Such sets preclude the existence of certain types of isometry into $L_{p}$-spaces.
\end{abstract}

\dedicatory{Dedicated to Nancy Joy Weston}

\maketitle

\section{Introduction}\label{sec:1}

A set $B \subset L_{p}(\Omega, \mu)$ is \textit{two-valued} if $|B| > 1$ and the
essential range of each $f \in B$ is a subset of $\{ 0,1 \}$. Our interest in two-valued
subsets of $L_{p}(\Omega, \mu)$ was piqued by the following elegant theorem.

\begin{theorem}[Murugan \cite{Mur}]\label{hyp:cube}
Suppose $k, n \geq 1$. A subset $B = \{ \mathbf{x}_{0}, \mathbf{x}_{1}, \ldots, \mathbf{x}_{k} \}$ of
the Hamming cube $\{ 0, 1\}^{n} \subset \ell_{1}^{(n)}$ is affinely independent if and only if
$B$ has strict $1$-negative type.
\end{theorem}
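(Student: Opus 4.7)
The plan is to exploit the elementary observation that on the Hamming cube, the $\ell_1$ and squared $\ell_2$ distances coincide: since each coordinate difference $x_i - y_i$ lies in $\{-1,0,1\}$ whenever $x, y \in \{0,1\}^n$, we have $|x_i - y_i| = (x_i - y_i)^2$, hence
\[
\|x - y\|_1 \;=\; \sum_{i=1}^{n}|x_i - y_i| \;=\; \sum_{i=1}^{n}(x_i - y_i)^2 \;=\; \|x - y\|_2^2.
\]
This collapses the $1$-negative type question on $B$ to a $2$-negative type question in the Hilbert space $\ell_{2}^{(n)}$.

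Next, I would invoke the classical polarization identity: for any vectors $\mathbf{x}_0, \ldots, \mathbf{x}_k$ in a real Hilbert space $H$ and any real scalars $\eta_0, \ldots, \eta_k$ with $\sum_{i} \eta_i = 0$,
\[
\sum_{i,j=0}^{k} \eta_i \eta_j \|\mathbf{x}_i - \mathbf{x}_j\|_H^2 \;=\; -2\Big\|\sum_{i=0}^{k} \eta_i \mathbf{x}_i\Big\|_H^2.
\]
Combining this with the cube identity above yields, for the metric $d$ inherited from $\ell_1^{(n)}$,
\[
\sum_{i,j=0}^{k} \eta_i \eta_j \, d(\mathbf{x}_i,\mathbf{x}_j) \;=\; -2\Big\|\sum_{i=0}^{k} \eta_i \mathbf{x}_i\Big\|_2^2 \;\leq\; 0,
\]
so $B$ always has $1$-negative type. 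Moreover, equality to zero forces $\sum_{i}\eta_i \mathbf{x}_i = 0$.

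With this identity in hand, the equivalence is immediate. If $B$ is affinely independent, then the simultaneous conditions $\sum_{i} \eta_i = 0$ and $\sum_{i} \eta_i \mathbf{x}_i = 0$ force $\eta_0 = \cdots = \eta_k = 0$, which is exactly strict $1$-negative type. Conversely, if $B$ is affinely dependent, choose nontrivial scalars $\eta_0, \ldots, \eta_k$ witnessing the dependence; these produce a degenerate equality in the negative-type inequality, so strict $1$-negative type fails.

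There is no real obstacle here: the whole argument rests on the single coincidence $\|x-y\|_1 = \|x-y\|_2^2$ on $\{0,1\}^n$, which reduces an $\ell_1$-geometry statement to a transparent Hilbert space identity. The only point requiring care is to verify that the right-hand side of the polarization identity vanishes \emph{precisely} when $\sum_i \eta_i \mathbf{x}_i = 0$ in $\ell_2^{(n)}$, which is the same vector equation that appears in the definition of affine independence; this is what makes the two conditions match up exactly.
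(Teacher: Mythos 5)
Your proof is correct, and it arrives at exactly the identity that drives the paper's argument --- namely that for weights summing to zero the $1$-negative-type quadratic form on the cube equals $-2\bigl\| \sum_{i} \eta_{i}\mathbf{x}_{i} \bigr\|_{2}^{2}$, which is the Hamming-cube instance of the formula $\gamma_{p}(D) = \bigl\| \sum_{j} m_{j}x_{j} - \sum_{i} n_{i}y_{i} \bigr\|_{2}^{2}$ in Theorem \ref{thm:1} --- but you reach it by a different route. The paper first observes that all mutual distances in a two-valued set are $0$ or $1$ (so $\gamma_{p} = \gamma_{1}$ for every $p$), then proves the scalar case of the identity by ``compressing'' a simplex onto the two vertices $0$ and $1$ and computing with repeating numbers (Lemma \ref{lemma:1}), integrates pointwise over $\Omega$, and finally invokes the abstract equivalence between non-degenerate balanced simplices and affine dependence (Theorem \ref{thm:B}). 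You instead use the coordinatewise coincidence $\|x-y\|_{1} = \|x-y\|_{2}^{2}$ on $\{0,1\}^{n}$ together with the standard Hilbert-space polarization computation, and you verify the kernel condition ($\sum\eta_{i}=0$ and $\sum\eta_{i}\mathbf{x}_{i}=0$ forces $\eta=0$) directly against the definition of affine independence rather than through the simplex formalism. Your version is shorter and more classical for the finite cube; the paper's version is organized so that the same computation extends verbatim (by evaluation and integration) to arbitrary two-valued subsets of $L_{p}(\Omega,\mu)$ for all $p\in(0,\infty)$, which is the generalization the paper is after. Note, incidentally, that your argument also shows $p$-negative type of $B$ for every $p>0$, not just $p=1$, since $d_{1}^{p}=d_{1}$ on the cube --- the same observation that opens the proof of Lemma \ref{lemma:1}.
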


The two-valued subsets of $\ell_{1}^{(n)}$ are precisely those $B \subseteq \{ 0, 1\}^{n}$
that have at least two elements. A notable feature of Theorem \ref{hyp:cube} is that the subsets of
the Hamming cube that have strict $1$-negative type are characterized solely
in terms of the vector space structure of $\mathbb{R}^{n}$. In this paper we generalize
Theorem \ref{hyp:cube} to the setting of arbitrary two-valued subsets of $L_{p}$-spaces.
Our results are valid for all $p \in (0, \infty)$. In order to proceed it is helpful to recall
some basic information about classical and strict negative type.

Negative type was originally studied in relation to the problem of isometrically
embedding metric and normed spaces into $L_{p}$-spaces, $0 < p \leq 2$. Cayley \cite{Cay}, Menger \cite{Men},
Schoenberg \cite{Sc3} and Bretagnolle \textit{et al.}\ \cite{Bre} authored pivotal papers on this
embedding problem. The closely related notion of generalized roundness was developed by Enflo \cite{En2}
to study universal uniform embedding spaces. These notions may be defined in the following manner.

\begin{definition}\label{neg:gen} Let $p \geq 0$ and let $(X,d)$ be a metric space with $|X| > 1$.
\begin{enumerate}
\item $(X,d)$ has $p$-{\textit{negative type}} if and only if for all integers $n \geq 2$,
all finite subsets $\{z_{1}, \ldots , z_{n} \} \subseteq X$, and all scalar $n$-tuples
$\boldsymbol{\zeta} = (\zeta_{1}, \ldots, \zeta_{n}) \in \mathbb{R}^{n}$ that satisfy
$\zeta_{1} + \cdots + \zeta_{n} = 0$, we have
\begin{eqnarray}\label{p:neg}
\sum\limits_{j,i =1}^{n} d(z_{j},z_{i})^{p} \zeta_{j} \zeta_{i} & \leq & 0.
\end{eqnarray}

\item $(X,d)$ has \textit{strict} $p$-{\textit{negative type}} if and only if it has $p$-negative type
and the inequalities (\ref{p:neg}) are strict except in the trivial case $\boldsymbol{\zeta} = \boldsymbol{0}$.

\item The \textit{generalized roundness} of $(X,d)$, denoted by $\wp_{(X, d)}$ or simply $\wp_{X}$,
is the supremum of the set of all $p$ for which $(X,d)$ has $p$-negative type.
\end{enumerate}
\end{definition}

Most studies of negative type properties of subsets and subspaces of $L_{p}$-spaces have focussed
on the case $0 < p \leq 2$. There are a number of reasons why this restriction on the range of $p$
is observed in the literature. For instance, Schoenberg \cite{Sc3} noted that if $0 < p \leq 2$ and
if $(\Omega, \mu)$ is a measure space such that $L_{p}(\Omega, \mu)$ is at least two-dimensional,
then $L_{p}(\Omega, \mu)$ has $p$-negative type but it does not have $q$-negative type for any $q > p$.
This ensures that non-empty subsets of $L_{p}(\Omega, \mu)$ have at least $p$-negative type
provided $0 < p \leq 2$. The situation is vastly different if $p > 2$. In this case,
if $(\Omega, \mu)$ is a measure space such that $L_{p}(\Omega, \mu)$ is at least three-dimensional,
then $L_{p}(\Omega, \mu)$ does not have $q$-negative type for any $q > 0$. This implication follows
from theorems of Bretagnolle \textit{et al.}\ \cite{Bre}, Dor \cite{Dor}, Misiewicz \cite{Mis}
and Koldobsky \cite{Kol}. One effect of the latter result is that it makes it considerably more
difficult to determine which normed spaces embed linearly and isometrically into some $L_{p}$-space
when $p > 2$ (because $p$-negative type is no longer a necessary or sufficient condition).

In spite of this apparent roadblock when $p > 2$, it nevertheless seems to be an intriguing project
to determine subsets of $L_{p}(\Omega, \mu)$ that have $s$-negative type for some $s = s(p) > 0$
that depends upon $p$. This is one avenue of inquiry in this paper: for each $p \in (0, \infty)$ we
identify interesting subsets of $L_{p}(\Omega, \mu)$ that have $p$-negative type but not $q$-negative type for
any $q > p$. For the reasons we have indicated, this type of result is rare in the case $p > 2$.

For $p \in (0, 2)$, Kelleher \textit{et al}.\ \cite{Kel} have shown that if a subset $B$ of
$L_{p}(\Omega, \mu)$ is affinely independent (when $L_{p}(\Omega, \mu)$ is considered as a real vector space),
then $B$ has strict $p$-negative type. The converse of this statement is true when $p = 2$ but not when
$p < 2$ (see Theorem \ref{thm:inner} and Remark \ref{rem:ex}). However, in the case $p =1$, we see from Theorem
\ref{hyp:cube} that the converse statement can hold under additional assumptions. Murugan's proof of
Theorem \ref{hyp:cube} relies on novel properties of vertex transitive graphs and the special geometry
of the Hamming cube. Another way to prove Theorem \ref{hyp:cube} is to apply Nickolas and Wolf
\cite[Theorem 3.4 (3)]{Nic}. In a second avenue of inquiry in this paper, we obtain a significant generalization
of Murugan's theorem. It turns out that the only thing one really needs to know about the Hamming cube
is that it is a two-valued set in an $L_{p}$-space, $0 < p < \infty$. In fact, the two avenues of inquiry
that we have mentioned converge to one and the same thing: the geometry of two-valued sets in $L_{p}$-spaces.

The results in this paper are obtained by studying the nature of $p$-polygonal equalities in two-valued
subsets of $L_{p}$-spaces. Such equalities are completely characterized in terms of balanced simplices
in Theorem \ref{thm:1}. This is an interesting result because balanced simplices, being defined purely
in terms of the underlying vector space (Definition \ref{def:bal}), do not depend upon $p$. It follows,
at once, that each two-valued set $B \subset L_{p}(\Omega, \mu)$ has $p$-negative type. Furthermore, $B$
is seen to have strict $p$-negative type if and only if $B$ is affinely independent (when $L_{p}(\Omega, \mu)$
is considered as a real vector space). These results are valid for all $p \in (0, \infty)$ and they
give noteworthy information about the metric geometry of two-valued sets in $L_{p}$-spaces. For
instance, it follows that the system of Walsh functions in $L_{p}[0,1]$ has strict $p$-negative type
for all $p \in (0, \infty)$. We also see, for the first time in the literature, a systematic way to
construct subsets of $L_{p}(\Omega, \mu)$ that have generalized roundness $p$ for $p > 2$.

Throughout we assume that all measures are non-trivial and positive. We further assume that
$0 < p < \infty$ (unless stated otherwise) and that $(\Omega, \mu)$ is a measure space for
which $L_{p}(\Omega, \mu)$ is at least two-dimensional. If $p \in (0,1)$, then $L_{p}(\Omega, \mu)$ is
endowed with the usual quasi-norm. We also let $\mathcal{M}(\Omega, \mu)$ denote the vector space of
all scalar-valued measurable functions on $(\Omega, \mu)$. It is always the case that sums indexed
over the empty set are taken to be zero. When we refer to a subset of a metric space as having
$p$-negative type we mean, of course, the subset together with the metric that it inherits from the
ambient space.

\section{Signed simplices and polygonal equalities in $L_{p}$-spaces}\label{sec:2}

Signed simplices and $p$-polygonal equalities provide an effective alternative means for studying
classical and strict $p$-negative type.

\begin{definition}\label{S1}
Let $X$ be a set and suppose that $s,t > 0$ are integers.
A \textit{signed $(s,t)$-simplex in $X$} is a collection of (not necessarily distinct) points
$x_{1}, \ldots, x_{s}, y_{1}, \ldots, y_{t} \in X$ together with a corresponding collection
of real numbers $m_{1}, \ldots, m_{s}, n_{1}, \ldots, n_{t}$ that satisfy $m_{1} + \cdots + m_{s}
= n_{1} + \cdots + n_{t}$. Such a configuration of points and real numbers will be denoted by
$D = [x_{j}(m_{j});y_{i}(n_{i})]_{s,t}$ and will hereafter simply be called a \textit{simplex}.
\end{definition}

A simplex $D = [x_{j}(m_{j});y_{i}(n_{i})]_{s,t}$ in the real line will be called a
\textit{two-valued simplex} if $x_{j}, y_{i} \in \{ 0,1 \}$ for all $j, i$.
A simplex $D = [x_{j}(m_{j});y_{i}(n_{i})]_{s,t}$ in $L_{p}(\Omega, \mu)$ will be called a
\textit{two-valued simplex} if $\{ x_{j}, y_{i} \}$ is a two-valued set in $L_{p}(\Omega, \mu)$.

\begin{definition}\label{S3}
Given a signed $(s,t)$-simplex $D = [x_{j}(m_{j});y_{i}(n_{i})]_{s,t}$ in a set $X$ we
denote by $S(D)$ the set of distinct points in $X$ that appear in $D$.
For each $z \in S(D)$ we define the \textit{repeating numbers} ${\mathbf{m}}(z)$ and ${\mathbf{n}}(z)$ as follows:
\[
{\mathbf{m}}(z)  =  \sum\limits_{j : z = x_{j}} m_{j} \mbox{ and }
{\mathbf{n}}(z)  =  \sum\limits_{i : z = y_{i}} n_{i}.
\]
We say that the simplex $D$ is \textit{degenerate} if ${\mathbf{m}}(z) = {\mathbf{n}}(z)$ for all $z \in S(D)$.
\end{definition}

Informally, a simplex $D$ is degenerate if each $z \in S(D)$ is equally represented in both halves of the simplex.
Degenerate simplices in a metric space $(X, d)$ equate in a precise way to the trivial case $\boldsymbol{\zeta} =
\boldsymbol{0}$ in Definition \ref{neg:gen}.

\begin{definition}\label{S4}
Let $p \geq 0$ and let $(X,d)$ be a metric space.
For each signed $(s,t)$-simplex $D = [x_{j}(m_{j});y_{i}(n_{i})]_{s,t}$ in $X$ we define
\begin{eqnarray*}
\gamma_{p}(D) & = &
\sum\limits_{j,i = 1}^{s,t} m_{j}n_{i}d(x_{j},y_{i})^{p} -
\sum\limits_{ 1 \leq j_{1} < j_{2} \leq s} m_{j_{1}}m_{j_{2}}d(x_{j_{1}},x_{j_{2}})^{p} \\
              & ~ & - \sum\limits_{ 1 \leq i_{1} < i_{2} \leq t} n_{i_{1}}n_{i_{2}}d(y_{i_{1}},y_{i_{2}})^{p}.
\end{eqnarray*}
We call $\gamma_{p}(D)$ the \textit{$p$-simplex gap of $D$ in $(X,d)$}.
\end{definition}

The fundamental relationships between negative type and simplex gaps are expressed in the following theorem.

\begin{theorem}[Kelleher \textit{et al}.\ \cite{Kel}]\label{2.3}
Let $p > 0$ and let $(X,d)$ be a metric space. Then the following conditions are equivalent:
\begin{enumerate}
\item $(X,d)$ has $p$-negative type.

\item $\gamma_{p}(D) \geq 0$ for each signed simplex $D$ in $X$.
\end{enumerate}
Moreover, $(X,d)$ has strict $p$-negative type if and only if $\gamma_{p}(D) > 0$ for each
non-degenerate signed simplex $D$ in $(X,d)$.
\end{theorem}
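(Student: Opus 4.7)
My plan is to prove the equivalence by establishing a direct algebraic identity between the simplex gap $\gamma_{p}(D)$ and the left-hand side of the $p$-negative type inequality (\ref{p:neg}), via a natural bijective correspondence between signed simplices in $X$ and pairs $(\{z_{1},\ldots,z_{n}\}, \boldsymbol{\zeta})$ with $\sum \zeta_{k} = 0$.

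Given a signed $(s,t)$-simplex $D = [x_{j}(m_{j}); y_{i}(n_{i})]_{s,t}$, first relabel the underlying points as $z_{1}, \ldots, z_{s+t}$ with $z_{j} = x_{j}$ for $j \leq s$ and $z_{s+i} = y_{i}$ for $i \leq t$, and set $\zeta_{j} = m_{j}$ for $j \leq s$ and $\zeta_{s+i} = -n_{i}$ for $i \leq t$. The balance condition $m_{1} + \cdots + m_{s} = n_{1} + \cdots + n_{t}$ gives $\zeta_{1} + \cdots + \zeta_{s+t} = 0$. A straightforward expansion of the double sum, using $d(z_{k}, z_{k}) = 0$ and grouping the off-diagonal terms according to whether both indices come from the $x$-block, both from the $y$-block, or one from each, then yields the identity
\begin{equation*}
\sum\limits_{k,\ell = 1}^{s+t} d(z_{k}, z_{\ell})^{p}\, \zeta_{k}\zeta_{\ell} \;=\; -\,2\,\gamma_{p}(D).
\end{equation*}
Conversely, given any finite set $\{z_{1}, \ldots, z_{n}\}$ and weights $\boldsymbol{\zeta}$ with $\sum \zeta_{k} = 0$, split the indices according to the sign of $\zeta_{k}$ (discarding indices with $\zeta_{k} = 0$): let $m_{j} = \zeta_{j}$ for those $j$ with $\zeta_{j} > 0$ and $n_{i} = -\zeta_{i}$ for those $i$ with $\zeta_{i} < 0$. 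The balance condition is automatic, and the same identity holds. This bijection, together with the equivalence between $\sum d(z_{k}, z_{\ell})^{p}\zeta_{k}\zeta_{\ell} \leq 0$ and $\gamma_{p}(D) \geq 0$, gives (1) $\Longleftrightarrow$ (2).

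For the strict version, the main bookkeeping point is to show that the degeneracy condition $\mathbf{m}(z) = \mathbf{n}(z)$ for all $z \in S(D)$ corresponds exactly to the trivial case $\boldsymbol{\zeta} = \mathbf{0}$ after consolidation of repeated points. Indeed, after collapsing the simplex $D$ to its distinct underlying points in $S(D)$ and recording the net weight $\mathbf{m}(z) - \mathbf{n}(z)$ at each $z$, the assignment $\zeta(z) = \mathbf{m}(z) - \mathbf{n}(z)$ produces a balanced weighting of the distinct points of $S(D)$. Crucially, $\gamma_{p}(D)$ is invariant under this consolidation (since identical points contribute zero distance terms and additive weights), so $\gamma_{p}(D) = -\tfrac{1}{2}\sum_{z, z'} d(z,z')^{p} \zeta(z)\zeta(z')$; and $D$ is non-degenerate if and only if $\zeta(\cdot) \not\equiv 0$ on $S(D)$. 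The equivalence of strict $p$-negative type with positivity of $\gamma_{p}(D)$ on non-degenerate simplices then follows from the non-strict case.

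The main obstacle is not conceptual but purely notational: I would need to verify carefully that the sign conventions and the splitting of the double sum into intra-block and cross-block terms match the asymmetric form of $\gamma_{p}(D)$ in Definition \ref{S4}, and to argue cleanly that consolidation of repeated points within a signed simplex changes neither $\gamma_{p}(D)$ nor the degeneracy status, so that the correspondence with balanced real weightings on distinct points is well-defined in both directions.
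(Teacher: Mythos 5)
Your argument is correct: the identity $\sum_{k,\ell} d(z_{k},z_{\ell})^{p}\zeta_{k}\zeta_{\ell} = -2\gamma_{p}(D)$ under the sign-splitting correspondence, together with the consolidation step that matches degeneracy of $D$ to the trivial case $\boldsymbol{\zeta}=\boldsymbol{0}$, is exactly the standard proof of this equivalence. The paper itself states Theorem \ref{2.3} without proof (citing Kelleher \textit{et al}.), and your route is the same one used in that reference and its antecedents, so there is nothing further to add.
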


Theorem \ref{2.3} is a variation of themes developed in Weston \cite{We1}, Lennard \textit{et al}.\
\cite{Ltw}, and Doust and Weston \cite{Dou}. The motivation for all such studies has stemmed from
Enflo's original definition of generalized roundness that was given in \cite{En2}. Theorem \ref{2.3}
motivates the notion of a (non-trivial) $p$-polygonal equality. The nature of such equalities in
two-valued subsets of $L_{p}(\Omega, \mu)$ will be central to the development of our main results in
Section \ref{sec:3}.

\begin{definition}\label{S6}
Let $p \geq 0$ and let $(X,d)$ be a metric space.
A \textit{$p$-polygonal equality} in $(X, d)$ is an equality of the form $\gamma_{p}(D) = 0$
where $D$ is a simplex in $X$. If, moreover, the simplex $D$ is non-degenerate, we will say that
the $p$-polygonal equality $\gamma_{p}(D) = 0$ is \textit{non-trivial}.
\end{definition}

For $p \in (0, 2)$, the non-empty subsets of $L_{p}(\Omega, \mu)$ that have strict $p$-negative
type may be characterized in terms of so-called virtually degenerate simplices.

\begin{definition}\label{v:degenerate}
A non-degenerate simplex $D = [x_{j}(m_{j});y_{i}(n_{i})]_{s,t}$ in $L_{p}(\Omega, \mu)$ is said to be
\textit{virtually degenerate} if the family of evaluation simplices $D(\omega) = [x_{j}(\omega)(m_{j});y_{i}(\omega)(n_{i})]_{s,t}$,
$\omega \in \Omega$, are degenerate in the scalar field of $L_{p}(\Omega, \mu)$ $\mu$-almost everywhere.
\end{definition}

It is not known to what extent the following theorem may be generalized to the case $p \in (2, \infty)$.

\begin{theorem}[Kelleher \textit{et al}.\ \cite{Kel}]\label{Lp}
Suppose $p \in (0,2)$. A non-empty subset of $L_{p}(\Omega, \mu)$ has strict $p$-negative type if
and only if it does not admit any virtually degenerate simplices.
\end{theorem}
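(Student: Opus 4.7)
The plan is to invoke Theorem~\ref{2.3} and then to unfold $\gamma_{p}(D)$ via the classical Schoenberg/Bretagnolle integral representation
\[
|t|^{p} \;=\; C_{p}\int_{0}^{\infty}\frac{1-\cos(tu)}{u^{1+p}}\,\d u \qquad (t\in\RR),
\]
which holds for every $p\in(0,2)$ with some positive constant $C_{p}$. Using $\|f-g\|_{p}^{p}=\int_{\Omega}|f(\omega)-g(\omega)|^{p}\,\d\mu(\omega)$ together with this representation applied to each $|x_{j}(\omega)-y_{i}(\omega)|^{p}$, $|x_{j_{1}}(\omega)-x_{j_{2}}(\omega)|^{p}$ and $|y_{i_{1}}(\omega)-y_{i_{2}}(\omega)|^{p}$ appearing in $\gamma_{p}(D)$, Fubini gives
\[
\gamma_{p}(D)\;=\;C_{p}\int_{\Omega}\int_{0}^{\infty}\frac{\Phi(\omega,u)}{u^{1+p}}\,\d u\,\d\mu(\omega),
\]
where $\Phi(\omega,u)$ is the corresponding signed combination of $1-\cos$ terms evaluated at $\omega$.

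Next I would collect the cosines into a sum of squares. The identity $\cos(u(a-b))=\cos(ua)\cos(ub)+\sin(ua)\sin(ub)$, together with the balance condition $m_{1}+\cdots+m_{s}=n_{1}+\cdots+n_{t}$ which kills the residual constant contributions, reduces $\Phi$ to
\[
\Phi(\omega,u)\;=\;\tfrac{1}{2}\bigl[(A-C)^{2}+(B-D)^{2}\bigr],
\]
where $A=\sum_{j}m_{j}\cos(ux_{j}(\omega))$, $B=\sum_{j}m_{j}\sin(ux_{j}(\omega))$, and $C,D$ are the analogous sums over $n_{i}$ and $y_{i}(\omega)$. In particular $\gamma_{p}(D)\geq 0$ for every simplex $D$, which recovers the classical fact that $L_{p}(\Omega,\mu)$ has $p$-negative type for $0<p<2$.

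For the strict direction, note that $\gamma_{p}(D)=0$ forces $A=C$ and $B=D$ for $\mu$-a.e.\ $\omega$ and, by continuity in $u$, for every $u\in\RR$. Equivalently, for $\mu$-a.e.\ $\omega$ the two finite atomic signed measures
\[
\sum_{j}m_{j}\,\delta_{x_{j}(\omega)} \quad\text{and}\quad \sum_{i}n_{i}\,\delta_{y_{i}(\omega)}
\]
on $\RR$ have identical Fourier transforms, and so coincide by the uniqueness theorem for Fourier transforms of finite signed measures. This is precisely the statement that the scalar simplex $D(\omega)$ is degenerate in the sense of Definition~\ref{S3} for $\mu$-a.e.\ $\omega$. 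Combined with Theorem~\ref{2.3} this yields: a non-empty $B\subseteq L_{p}(\Omega,\mu)$ fails to have strict $p$-negative type if and only if some non-degenerate simplex $D$ in $B$ satisfies $\gamma_{p}(D)=0$, if and only if $B$ admits a virtually degenerate simplex.

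The main technical obstacle I anticipate is the measure-theoretic bookkeeping in the last implication: passing from ``$\Phi(\omega,u)/u^{1+p}$ integrates to zero on $\Omega\times(0,\infty)$'' to ``for $\mu$-a.e.\ $\omega$ the two signed measures agree for every $u$''. One handles this via Fubini, then uses continuity of $\Phi(\omega,\cdot)$ and separability of $\RR$ (a countable dense grid of $u$'s) to upgrade an a.e.-in-$u$ statement to an all-$u$ statement outside a single $\mu$-null set, after which Fourier uniqueness applies. Everything else amounts to the trigonometric identity above and a direct appeal to Theorem~\ref{2.3}.
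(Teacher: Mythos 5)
The paper does not prove this statement: Theorem~\ref{Lp} is quoted verbatim from Kelleher \textit{et al}.\ \cite{Kel} and used as a black box, so there is no in-paper proof to compare yours against. Judged on its own, your argument is correct and is in fact the standard route (and essentially the route of the cited source): the L\'{e}vy representation $|t|^{p}=C_{p}\int_{0}^{\infty}(1-\cos(tu))u^{-1-p}\,\d u$ is valid precisely for $p\in(0,2)$, Tonelli applies term by term since each integrand $1-\cos$ is nonnegative and each pairwise term integrates to a finite $p$-th power of a distance, and the trigonometric reduction to $\Phi(\omega,u)=\tfrac{1}{2}[(A-C)^{2}+(B-D)^{2}]$ checks out (the diagonal contributions $\tfrac12(\sum m_{j}^{2}+\sum n_{i}^{2})$ from completing the squares cancel against the constant part, with the balance condition killing the remaining $-\tfrac12(\sum m_{j}-\sum n_{i})^{2}$). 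The passage from $\gamma_{p}(D)=0$ to degeneracy of $D(\omega)$ a.e.\ is also fine, and can be streamlined: one Fubini step gives $\int_{0}^{\infty}\Phi(\omega,u)u^{-1-p}\,\d u=0$ for $\mu$-a.e.\ $\omega$, and since $\Phi(\omega,\cdot)$ is continuous and nonnegative it vanishes identically for each such $\omega$, after which Fourier uniqueness for finite atomic signed measures gives exactly ${\mathbf{m}}(z)={\mathbf{n}}(z)$ for all $z\in S(D(\omega))$, i.e.\ degeneracy in the sense of Definition~\ref{S3}; combining with Theorem~\ref{2.3} and Definition~\ref{v:degenerate} closes the equivalence. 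The only genuine gap is scalar-field generality: the paper allows complex-valued measurable functions, and your one-dimensional cosine representation only covers real scalars. For complex (or $\mathbb{R}^{d}$-valued) scalars one must use the $d$-dimensional L\'{e}vy representation $|z|^{p}=C_{p,d}\int_{\mathbb{R}^{d}}(1-\cos\langle z,\xi\rangle)|\xi|^{-d-p}\,\d\xi$, after which the identical square-completion and Fourier-uniqueness argument goes through; this is a routine fix but should be stated.
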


A difficulty with Theorem \ref{Lp} is that the description of the subsets of $L_{p}(\Omega, \mu)$ that
have strict $p$-negative type is not purely geometric. In practice, it is a hard problem to decide
if a set $B \subset L_{p}(\Omega, \mu)$ admits a virtually degenerate simplex. However, virtually
degenerate simplices satisfy the following definition.

\begin{definition}\label{def:bal}
Let $D = [x_{j}(m_{j});y_{i}(n_{i})]_{s,t}$ be a simplex in a vector space $X$.
We say that $D$ is \textit{balanced} if $\sum m_{j}x_{j} = \sum n_{i}y_{i}$.
\end{definition}

Informally, a simplex $D$ in a vector space $X$ is balanced if the two halves of the simplex have
the same center of gravity. In a real or complex vector space there is a direct link between non-degenerate
balanced simplices and affinely dependent subsets. This theorem underpins the main considerations of this paper.

\begin{theorem}[Kelleher \textit{et al}.\ \cite{Kel}]\label{thm:B}
Let $n \geq 1$ be an integer and let $X$ be a real or complex vector space. Then a subset
$Z = \{ z_{0}, z_{1}, \ldots z_{n} \}$ of $X$ admits a non-degenerate balanced simplex if and
only if the set $Z$ is affinely dependent (when $X$ is considered as a real vector space).
\end{theorem}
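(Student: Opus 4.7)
The plan is to prove the two directions by direct manipulation of the definitions. The key observation is that the coefficient vector $(\alpha_0,\dots,\alpha_n)$ witnessing an affine dependence of $Z$ and the pair of coefficient systems $(m_j),(n_i)$ defining a balanced simplex on $Z$ are essentially the same data, split into positive and negative parts.

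For the forward direction, I would start with a non-degenerate balanced simplex $D = [x_j(m_j);y_i(n_i)]_{s,t}$ with all $x_j,y_i \in Z$. For each $k \in \{0,1,\dots,n\}$ I would set
\[
\alpha_k \;=\; \mathbf{m}(z_k) - \mathbf{n}(z_k).
\]
Summing these definitions and using $\sum m_j = \sum n_i$ gives $\sum_k \alpha_k = 0$, while the balanced condition $\sum m_j x_j = \sum n_i y_i$ rearranges (after collecting repeated points) to $\sum_k \alpha_k z_k = 0$. Non-degeneracy of $D$ says that $\mathbf{m}(z) \neq \mathbf{n}(z)$ for some $z \in S(D)$, so at least one $\alpha_k$ is nonzero. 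Hence $Z$ is affinely dependent over $\mathbb{R}$.

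For the reverse direction, I would fix a real affine dependence $\alpha_0,\dots,\alpha_n$, not all zero, with $\sum_k \alpha_k = 0$ and $\sum_k \alpha_k z_k = 0$. Set $P = \{k : \alpha_k > 0\}$ and $N = \{k : \alpha_k < 0\}$. Since not all $\alpha_k$ are zero and they sum to zero, both $P$ and $N$ are nonempty; let $s = |P|$ and $t = |N|$, enumerate the points of $P$ as $x_1,\dots,x_s$ with weights $m_j = \alpha_{k_j} > 0$, and the points of $N$ as $y_1,\dots,y_t$ with weights $n_i = -\alpha_{k_i} > 0$. Then $\sum m_j = \sum_{k \in P} \alpha_k = -\sum_{k \in N} \alpha_k = \sum n_i$, so $D = [x_j(m_j); y_i(n_i)]_{s,t}$ is a signed $(s,t)$-simplex; the condition $\sum_k \alpha_k z_k = 0$ becomes $\sum m_j x_j = \sum n_i y_i$, i.e.\ $D$ is balanced. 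For non-degeneracy, note that for each $k$ one has $\mathbf{m}(z_k) - \mathbf{n}(z_k) = \alpha_k$ by construction (the indexing ensures the $x_j$'s and $y_i$'s are pairwise distinct), so at least one $z_k$ has unequal repeating numbers.

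The conceptual content is light; the main thing to watch is bookkeeping at the definitional level. The most delicate step is confirming that the construction in the reverse direction yields $s,t > 0$ and that the non-degeneracy condition, phrased via the repeating-number functions $\mathbf{m}$ and $\mathbf{n}$, aligns exactly with the existence of a nonzero $\alpha_k$. No deeper structural results are needed; the theorem is really an interpretation of affine relations as signed simplex data.
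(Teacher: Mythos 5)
Your proof is correct and complete: the positive/negative splitting of an affine dependence relation into the two halves of a simplex, and conversely reading off the coefficients $\alpha_k = \mathbf{m}(z_k)-\mathbf{n}(z_k)$, is exactly the right translation between the two notions, and you handle the non-degeneracy bookkeeping properly in both directions. The paper itself states this theorem as a citation to Kelleher \textit{et al}.\ without reproducing a proof, so there is nothing internal to compare against; your argument is the natural one and matches what that reference does.
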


Theorem \ref{thm:B} leads to a complete description of the subsets of inner product spaces
that have strict $2$-negative type.

\begin{theorem}[Kelleher \textit{et al}.\ \cite{Kel}]\label{thm:inner}
A non-empty subset $Z$ of a real or complex inner product space $X$ has strict $2$-negative type
if and only if $Z$ is an affinely independent subset of $X$ (when $X$ is considered as a real vector space).
\end{theorem}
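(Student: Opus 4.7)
The plan is to compute $\gamma_{2}(D)$ explicitly for an arbitrary signed simplex $D = [x_{j}(m_{j});y_{i}(n_{i})]_{s,t}$ in the inner product space $X$, and show it admits the closed form
\[
\gamma_{2}(D) \;=\; \Bigl\| \sum_{j=1}^{s} m_{j}x_{j} - \sum_{i=1}^{t} n_{i}y_{i} \Bigr\|^{2}.
\]
Once this identity is established, the theorem will follow from Theorem \ref{2.3} and Theorem \ref{thm:B} almost immediately. In particular, since $\gamma_{2}(D) \geq 0$ for every simplex, $Z$ has $2$-negative type, and strict $2$-negative type is equivalent to $\gamma_{2}(D) > 0$ for every non-degenerate simplex $D$ in $Z$, which by the identity is equivalent to $Z$ admitting no non-degenerate balanced simplex, which by Theorem \ref{thm:B} is equivalent to $Z$ being affinely independent as a subset of the underlying real vector space.

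To derive the identity, I would expand each term of $\gamma_{2}(D)$ via the polarization formula $\|a-b\|^{2} = \|a\|^{2} - 2\,\mathrm{Re}\,\langle a,b\rangle + \|b\|^{2}$, taking care that the coefficients $m_{j},n_{i}$ are real so the cross terms assemble into inner products of real linear combinations. Setting $M = m_{1}+\cdots+m_{s} = n_{1}+\cdots+n_{t}$, a direct computation gives
\[
\sum_{j,i} m_{j}n_{i}\|x_{j}-y_{i}\|^{2} = M\sum_{j} m_{j}\|x_{j}\|^{2} + M\sum_{i} n_{i}\|y_{i}\|^{2} - 2\,\mathrm{Re}\,\Bigl\langle \sum_{j}m_{j}x_{j},\sum_{i}n_{i}y_{i}\Bigr\rangle,
\]
while symmetrizing the sums over $j_{1}<j_{2}$ and $i_{1}<i_{2}$ yields
\[
\sum_{j_{1}<j_{2}} m_{j_{1}}m_{j_{2}}\|x_{j_{1}}-x_{j_{2}}\|^{2} = M\sum_{j} m_{j}\|x_{j}\|^{2} - \Bigl\| \sum_{j} m_{j}x_{j}\Bigr\|^{2},
\]
and similarly for the $y_{i}$ sum. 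Subtracting these, all terms of the form $M\sum m_{j}\|x_{j}\|^{2}$ cancel, and the remainder collapses via polarization into $\bigl\|\sum m_{j}x_{j} - \sum n_{i}y_{i}\bigr\|^{2}$.

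With the identity in hand, $\gamma_{2}(D) = 0$ if and only if $\sum m_{j}x_{j} = \sum n_{i}y_{i}$, which is precisely the condition that $D$ is \emph{balanced} in the sense of Definition \ref{def:bal}. Theorem \ref{2.3} then tells us that $Z$ has strict $2$-negative type exactly when no non-degenerate simplex in $Z$ is balanced, and Theorem \ref{thm:B} translates this into the affine independence of $Z$ viewed as a subset of the real vector space $X$.

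There is no real obstacle here beyond bookkeeping: the one subtlety to watch for is the complex case, where one must confirm that the polarization cross terms involving $\mathrm{Re}\,\langle\cdot,\cdot\rangle$ assemble correctly into the squared norm (they do, because $\|\sum m_{j}x_{j}\|^{2} = \sum_{j_{1},j_{2}} m_{j_{1}}m_{j_{2}}\langle x_{j_{1}},x_{j_{2}}\rangle$ is automatically real for real coefficients). Everything else is an application of the two cited theorems to the identity.
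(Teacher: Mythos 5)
The paper does not actually prove Theorem \ref{thm:inner}; it is imported from Kelleher \textit{et al}.\ \cite{Kel} as a known result, so there is no in-paper proof to compare against line by line. That said, your argument is correct: the identity $\gamma_{2}(D) = \bigl\| \sum_{j} m_{j}x_{j} - \sum_{i} n_{i}y_{i} \bigr\|^{2}$ checks out (the symmetrization $\sum_{j_{1}<j_{2}} = \tfrac{1}{2}\sum_{j_{1},j_{2}}$ works because the diagonal terms vanish, and the cross terms are real as you note), and the deduction via Theorems \ref{2.3} and \ref{thm:B} is exactly right --- the only point you gloss over is that Theorem \ref{thm:B} is stated for finite sets, but since simplices involve only finitely many points and affine (in)dependence is finitely determined, the extension to arbitrary non-empty $Z$ is immediate. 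Your route is the natural one and is precisely the $p=2$ analogue of the paper's own argument for two-valued sets: Theorem \ref{thm:1} plays the role of your identity, and the proof of Corollary \ref{cor:1}\,(2) is the same two-step application of Theorems \ref{2.3} and \ref{thm:B} that you carry out here.
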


A notable feature of Theorem \ref{thm:inner} is that the vector space structure of an inner
product space $X$ completely determines the subsets of $X$ that have strict $2$-negative type.
In the next section we will encounter a similar phenomenon for two-valued subsets of
$L_{p}$-spaces, $0 < p < \infty$.

\section{The geometry of two-valued subsets of $L_{p}(\Omega, \mu)$}\label{sec:3}

We begin this section by classifying the (non-trivial) $p$-polygonal equalities in two-valued subsets
of $L_{p}$-spaces, $0 < p < \infty$.

\begin{lemma}\label{lemma:1}
Let $p \in (0, \infty)$ be given.
If $D = [x_{j}(m_{j});y_{i}(n_{i})]_{s,t}$ is a two-valued simplex in the real line, then
\[
\gamma_{p}(D) = \gamma_{1}(D) = \biggl| \sum\limits_{j} m_{j}x_{j} - \sum\limits_{i} n_{i}y_{i} \biggl|^{2}.
\]
\end{lemma}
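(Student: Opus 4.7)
The plan is to reduce the statement to a direct algebraic computation once the role of $p$ is eliminated. Since $x_j, y_i \in \{0,1\}$, every pairwise distance appearing in $\gamma_p(D)$ lies in $\{0,1\}$, and hence $|x_j-y_i|^p = |x_j-y_i|$, $|x_{j_1}-x_{j_2}|^p = |x_{j_1}-x_{j_2}|$, and $|y_{i_1}-y_{i_2}|^p = |y_{i_1}-y_{i_2}|$ for every $p \in (0,\infty)$. This yields $\gamma_p(D) = \gamma_1(D)$ at once, so the remaining task is to establish the identity for $p=1$.

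Next I would partition the index sets according to the value assumed at each point. Set
\[
A = \{\, j : x_j = 1\,\}, \quad B = \{\, j : x_j = 0\,\}, \quad C = \{\, i : y_i = 1\,\}, \quad E = \{\, i : y_i = 0\,\},
\]
and define the accumulated weights $\alpha = \sum_{j \in A} m_j$, $\beta = \sum_{j \in B} m_j$, $\gamma = \sum_{i \in C} n_i$, $\delta = \sum_{i \in E} n_i$. With this notation, $\sum_j m_j x_j = \alpha$ and $\sum_i n_i y_i = \gamma$, and the defining balance condition of a simplex becomes $\alpha + \beta = \gamma + \delta$.

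I would then compute each of the three sums appearing in $\gamma_1(D)$. Since $|x_j - y_i|=1$ precisely when $(j,i) \in (A\times E) \cup (B \times C)$, the first sum equals $\alpha\delta + \beta\gamma$. Since $|x_{j_1}-x_{j_2}|=1$ precisely on $(A \times B) \cup (B \times A)$, the unordered sum $\sum_{j_1 < j_2} m_{j_1}m_{j_2}|x_{j_1}-x_{j_2}|$ equals $\alpha\beta$; analogously the third sum equals $\gamma\delta$. Hence
\[
\gamma_1(D) \;=\; \alpha\delta + \beta\gamma - \alpha\beta - \gamma\delta.
\]

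Finally I would substitute $\delta = \alpha + \beta - \gamma$ (from the balance condition) into the right-hand side; the $\beta$-dependent terms cancel and what remains is $\alpha^2 - 2\alpha\gamma + \gamma^2 = (\alpha-\gamma)^2 = \bigl|\sum_j m_j x_j - \sum_i n_i y_i\bigr|^2$, completing the proof. There is no real obstacle here: the proof is essentially a bookkeeping calculation, and its only conceptually non-trivial ingredient is the opening observation that the two-valued hypothesis collapses the $p$-dependence of the distances.
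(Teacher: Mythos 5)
Your proof is correct and follows essentially the same route as the paper: both arguments first use the fact that all distances are $0$ or $1$ to collapse the $p$-dependence, then reduce to the four accumulated weights at the values $0$ and $1$ (your $\alpha,\beta,\gamma,\delta$ are exactly the paper's repeating numbers $\mathbf{m}(1),\mathbf{m}(0),\mathbf{n}(1),\mathbf{n}(0)$ attached to a ``compressed'' $(2,2)$-simplex), and finish with the same algebraic identity via the balance condition.
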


\begin{proof} Let $p > 0$ be given.
Suppose that $D = [x_{j}(m_{j});y_{i}(n_{i})]_{s,t}$ is a two-valued simplex in the real line.
By definition, we have $x_{j}, y_{i} \in \{ 0, 1 \}$ for all $j$ and $i$. Thus all distances
appearing in the definition of $\gamma_{p}(D)$ are $0$ or $1$. As a result, we see that $\gamma_{p}(D) =
\gamma_{1}(D)$. This establishes the first equality of the lemma.

To establish the second equality, we create a new simplex by ``compressing'' the original simplex $D$.
We do this by setting $x_{1}^{\ast} = y_{1}^{\ast} = 0$ and $x_{2}^{\ast} = y_{2}^{\ast} = 1$. Weights
for these vertices are then given by the corresponding repeating numbers from the original simplex $D$:
$m_{1}^{\ast} = {\mathbf{m}}(0), m_{2}^{\ast} = {\mathbf{m}}(1), n_{1}^{\ast} = {\mathbf{n}}(0)$,
and $n_{2}^{\ast} = {\mathbf{n}}(1)$.

Notice that $m_{1}^{\ast} + m_{2}^{\ast} = m_{1} + \cdots m_{s} = n_{1} + \cdots n_{t} = n_{1}^{\ast} + n_{2}^{\ast}$.
The compressed simplex is then simply defined to be $D^{\ast} = [x_{j}^{\ast}(m_{j}^{\ast});y_{i}^{\ast}(n_{i}^{\ast})]_{2,2}$.
It easy to verify that $\gamma_{p}(D) = \gamma_{1}(D) = \gamma_{1}(D^{\ast})$.

By definition of $D$ and $D^{\ast}$,
\begin{eqnarray}\label{Leq:1}
\biggl| \sum\limits_{j} m_{j}x_{j} - \sum\limits_{i} n_{i}y_{i} \biggl|^{2} & = &
\biggl|\sum\limits_{j} m_{j}^{\ast}x_{j}^{\ast} - \sum\limits_{i} n_{i}^{\ast}y_{i}^{\ast} \biggl|^{2} \nonumber \\
& = & (m_{2}^{\ast} - n_{2}^{\ast})^{2}.
\end{eqnarray}

On the other hand,
\begin{eqnarray}\label{Leq:2}
\gamma_{p}(D) & = & \gamma_{1}(D^{\ast}) \nonumber \\
              & = & m_{1}^{\ast}n_{2}^{\ast} + m_{2}^{\ast}n_{1}^{\ast} - m_{1}^{\ast}m_{2}^{\ast} - n_{1}^{\ast}n_{2}^{\ast} \nonumber \\
              & = & (n_{1}^{\ast} - m_{1}^{\ast})(m_{2}^{\ast} - n_{2}^{\ast}) \nonumber \\
              & = & (m_{2}^{\ast} - n_{2}^{\ast})^{2}.
\end{eqnarray}
The lemma now follows from (\ref{Leq:1}) and (\ref{Leq:2}).
\end{proof}

\begin{theorem}\label{thm:1}
Let $p \in (0, \infty)$ be given.
If $D = [x_{j}(m_{j});y_{i}(n_{i})]_{s,t}$ is a two-valued simplex in $L_{p}(\Omega, \mu)$, then
\begin{eqnarray}\label{Teq:1}
\gamma_{p}(D) & = & \biggl\| \sum\limits_{j} m_{j}x_{j} - \sum\limits_{i} n_{i}y_{i} \biggl\|_{2}^{2}.
\end{eqnarray}
In particular, $\gamma_{p}(D) = 0$ if and only if the simplex $D$ is balanced.
\end{theorem}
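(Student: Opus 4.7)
The plan is to reduce the computation to a pointwise application of Lemma \ref{lemma:1} on the scalar field. The key observation is that the $p$-th power of the $L_{p}$-distance is literally an integral: $d(f,g)^{p} = \int_{\Omega} |f(\omega) - g(\omega)|^{p}\, d\mu(\omega)$, so the $p$-simplex gap of $D$ in $L_{p}(\Omega, \mu)$ should unwind into an integral of $p$-simplex gaps of evaluation simplices on $\mathbb{R}$.

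Concretely, I would substitute the integral expression for every pairwise term in the definition of $\gamma_{p}(D)$ and pull the (finite) sums inside the integral. This recasts the gap as
\[
\gamma_{p}(D) \;=\; \int_{\Omega} \gamma_{p}\bigl(D(\omega)\bigr)\, d\mu(\omega),
\]
where $D(\omega) = [x_{j}(\omega)(m_{j});\, y_{i}(\omega)(n_{i})]_{s,t}$ is the evaluation simplex in $\mathbb{R}$. Since $D$ is two-valued, the essential ranges of the $x_{j}$ and $y_{i}$ lie in $\{0,1\}$, so for $\mu$-almost every $\omega$ the evaluation $D(\omega)$ is a two-valued simplex in the real line. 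Lemma \ref{lemma:1} then applies pointwise and yields
\[
\gamma_{p}\bigl(D(\omega)\bigr) \;=\; \biggl| \sum_{j} m_{j} x_{j}(\omega) - \sum_{i} n_{i} y_{i}(\omega) \biggr|^{2}
\]
almost everywhere. Integrating this identity over $\Omega$ produces exactly $\bigl\| \sum_{j} m_{j} x_{j} - \sum_{i} n_{i} y_{i} \bigr\|_{2}^{2}$ (the $L_{2}$-norm is finite because the integrand is bounded and has finite-measure support), which is the identity \eqref{Teq:1}. The ``in particular'' clause is then immediate: $\gamma_{p}(D) = 0$ if and only if $\sum_{j} m_{j} x_{j} = \sum_{i} n_{i} y_{i}$ $\mu$-almost everywhere, which is exactly the definition of $D$ being balanced.

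I do not foresee a genuine obstacle here; the argument is essentially bookkeeping once one observes that $d^{p}$ is additive as an integral. The only care needed is verifying that ``two-valued simplex in $L_{p}(\Omega,\mu)$'' passes, on a $\mu$-conull set, to ``two-valued simplex in $\mathbb{R}$'', so that Lemma \ref{lemma:1} is available $\mu$-almost everywhere. All the substance of the identity is packaged inside Lemma \ref{lemma:1}, which has already told us that for two-valued simplices on the real line the $p$-simplex gap is independent of $p$ and admits the clean closed form that here promotes, by integration, to an $L_{2}$-norm.
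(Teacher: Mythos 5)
Your proposal is correct and follows essentially the same route as the paper: reduce to the evaluation simplices $D(\omega)$, apply Lemma \ref{lemma:1} pointwise $\mu$-almost everywhere, and integrate over $\Omega$ to obtain the $L_{2}$-norm identity. Your write-up simply makes explicit the step $\gamma_{p}(D) = \int_{\Omega}\gamma_{p}(D(\omega))\,\d\mu(\omega)$ that the paper leaves implicit.
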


\begin{proof}
Let $p > 0$ be given. Suppose that $D = [x_{j}(m_{j});y_{i}(n_{i})]_{s,t}$ is a two-valued simplex in $L_{p}(\Omega, \mu)$.
Except on a set of $\mu$-measure zero,
the evaluation simplices $D(\omega) = [x_{j}(\omega)(m_{j});y_{i}(\omega)(n_{i})]_{s,t}$, $\omega \in \Omega$, are
two-valued in the real line. So, by Lemma \ref{lemma:1}, we have
\begin{eqnarray*}
\gamma_{p}(D(\omega)) & = &
\biggl| \sum\limits_{j} m_{j}x_{j}(\omega) - \sum\limits_{i} n_{i}y_{i}(\omega) \biggl|^{2},
\end{eqnarray*}
$\mu$-almost everywhere. Integrating over $\Omega$ with respect to $\mu$ gives the desired conclusion.
\end{proof}

It is an immediate consequence of Theorem \ref{thm:1} that two-valued subsets of $L_{p}(\Omega, \mu)$
have $p$-negative type, $0 < p < \infty$. In fact, the following considerably stronger statement applies.

\begin{corol}\label{cor:1}
Let $B \subset L_{p}(\Omega, \mu)$ be a two-valued set. Then:
\begin{enumerate}
\item[(1)] $B$ has $p$-negative type.

\item[(2)] $B$ has strict $p$-negative type if and only if
$B$ is affinely independent (when $L_{p}(\Omega, \mu)$ is considered as a real vector space).

\item[(3)] If $B$ is affinely dependent (when $L_{p}(\Omega, \mu)$ is considered as a real vector space),
then $B$ has generalized roundness $p$.
\end{enumerate}
\end{corol}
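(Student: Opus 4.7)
The plan is to build the three assertions on top of Theorem \ref{thm:1}, which reduces $p$-simplex gaps in the two-valued setting to squared $L_{2}$-norms. Parts (1) and (2) fall out of the machinery of Section \ref{sec:2} (Theorems \ref{2.3} and \ref{thm:B}), while part (3) needs, in addition, a standard interpolation principle for negative type in order to conclude that the generalized roundness cannot strictly exceed $p$.

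For part (1), I take an arbitrary signed simplex $D = [x_{j}(m_{j});y_{i}(n_{i})]_{s,t}$ in $B$. Since $B$ is two-valued, $D$ is a two-valued simplex in $L_{p}(\Omega,\mu)$, so Theorem \ref{thm:1} yields
\[
\gamma_{p}(D) = \bigl\| \textstyle\sum_{j} m_{j}x_{j} - \sum_{i} n_{i}y_{i} \bigr\|_{2}^{2} \geq 0,
\]
and Theorem \ref{2.3} concludes that $B$ has $p$-negative type. For part (2), I chain three equivalences: by Theorem \ref{2.3}, $B$ has strict $p$-negative type if and only if $\gamma_{p}(D) > 0$ for every non-degenerate simplex $D$ in $B$; by Theorem \ref{thm:1}, this is equivalent to no non-degenerate simplex in $B$ being balanced; and by Theorem \ref{thm:B}, the latter is equivalent to $B$ being affinely independent when $\mathcal{M}(\Omega,\mu)$ is regarded as a real vector space.

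For part (3), I assume $B$ is affinely dependent. Part (1) already yields $\wp_{B} \geq p$. To obtain $\wp_{B} \leq p$, I would argue by contradiction: if $\wp_{B} > p$, the definition of generalized roundness provides some $q > p$ at which $B$ has $q$-negative type, and a standard interpolation principle in the spirit of Kelleher \textit{et al.}\ \cite{Kel} (proven for finite metric spaces and passed to $B$ via the fact that both strict $p$-negative type and $q$-negative type are finite-simplex conditions) promotes this to strict $p$-negative type for $B$. That conclusion contradicts part (2). Hence $\wp_{B} = p$. The non-routine step is precisely this interpolation appeal: it is the bridge needed to convert the qualitative statement "no strict $p$-negative type" (from part (2)) into the quantitative statement "generalized roundness exactly $p$".
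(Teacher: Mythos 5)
Your proposal is correct and follows essentially the same route as the paper: parts (1) and (2) via Theorem \ref{thm:1} combined with Theorems \ref{2.3} and \ref{thm:B}, and part (3) via the interpolation principle that $q$-negative type for some $q>p$ implies strict $p$-negative type. The only discrepancy is attributional: the paper cites Li and Weston \cite[Theorem 5.4]{Hli} for that last principle rather than Kelleher \textit{et al.}, and applies it directly (no strict $p$-negative type forces no $q$-negative type for $q>p$) instead of by contradiction, but the substance is identical.
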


\begin{proof}
For any simplex $D = [x_{j}(m_{j});y_{i}(n_{i})]_{s,t}$ in $B$ we have $\gamma_{p}(D) \geq 0$ by (\ref{Teq:1}).
Thus $B$ has $p$-negative type by part (1) of Theorem \ref{2.3}.

To prove (2) we argue contrapositively. Suppose that $B$ does not have strict $p$-negative type. Then,
by part (2) of Theorem \ref{2.3}, there must be a non-degenerate simplex $D$ in $B$ such that $\gamma_{p}(D) = 0$.
Hence $D$ is balanced by the second statement of Theorem \ref{thm:1}. Since $D$ is non-degenerate and balanced
it follows from Theorem \ref{thm:B} that $S(D)$ is affinely dependent (when $L_{p}(\Omega, \mu)$ is
considered as a real vector space). As $S(D) \subseteq B$, this shows that $B$ is affinely dependent
(when $L_{p}(\Omega, \mu)$ is considered as a real vector space).

Now suppose that $B$ is affinely dependent (when $L_{p}(\Omega, \mu)$ is considered as a real vector space).
Then $B$ admits a non-degenerate balanced simplex $D$ by Theorem \ref{thm:B}. Thus $\gamma_{p}(D) = 0$ by
the second statement of Theorem \ref{thm:1}. As $D$ is also non-degenerate we conclude from part (2) of Theorem \ref{2.3}
that $S(D)$, and hence $B$, does not have strict $p$-negative type. Moreover, since $B$ does not have
strict $p$-negative type, it cannot have $q$-negative type for any $q > p$ by Li and Weston \cite[Theorem 5.4]{Hli}.
However, $B$ has $p$-negative type by (1). Thus $\wp_{B} = p$. This completes the proof of (2) and (3).
\end{proof}

\begin{remark}\label{rem:ex}
For $p \not= 2$ the forward implication of Corollary \ref{cor:1} (2) does not necessarily hold for
subsets of $L_{p}(\Omega, \mu)$ that take three or more values. For example, consider the points
$z_{0} = (0,0), z_{1} = (1,1), z_{2} = (2,1)$ and $z_{3} = (2,0) \in \ell_{p}^{(2)}$ with $0 < p < 2$.
The set $Z$ takes the values $\{ 0, 1, 2 \}$ and is affinely dependent. Moreover, it is easy to see that
$Z$ does not admit any virtually degenerate simplices. Thus $Z$ has strict $p$-negative type by
Theorem \ref{Lp}. An even simpler example can be constructed in the case $p > 2$ by considering
any three-valued set in $\ell_{p}^{(2)}$ that contains distinct points $x, y$ and $z$ such that $z = (x + y)/2$.
The set $\{ x,y,z \} \subset \ell_{p}^{(2)}$ does not have $q$-negative type for any $q > 2$ because it
contains a metric midpoint.
\end{remark}

If $0 < p \leq 2$, Corollary \ref{cor:1} (1) holds for any subset of $L_{p}(\Omega, \mu)$. However,
if $2 < p < \infty$ and $L_{p}(\Omega, \mu)$ is at least three-dimensional, then $L_{p}(\Omega, \mu)$
does not have $q$-negative type for any $q > 0$. In this instance, Corollary \ref{cor:1} (1) is quite
striking since it identifies subsets of $L_{p}(\Omega, \mu)$ that have, at least, $p$-negative type.

The next corollary is an immediate consequence of Corollary \ref{cor:1} (2) because (Schauder) bases
are linearly independent.

\begin{corol}\label{cor:1.5}
Every two-valued (Schauder) basis of $L_{p}(\Omega, \mu)$ has strict $p$-negative type.
\end{corol}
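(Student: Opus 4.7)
The plan is to observe that this corollary is essentially an immediate consequence of Corollary~\ref{cor:1}(2); the only thing to check is that a Schauder basis of $L_p(\Omega,\mu)$, regarded as a subset of the real vector space underlying $L_p(\Omega,\mu)$, is affinely independent. Once that is done, Corollary~\ref{cor:1}(2) applies directly to the two-valued set $B$ consisting of the basis elements.

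First I would recall that every Schauder basis $B$ of $L_p(\Omega,\mu)$ is linearly independent over the scalar field: any nontrivial finite linear relation among basis elements would contradict the uniqueness of the basis expansion of $0$. If the scalar field happens to be $\CC$, the same set $B$ is also $\RR$-linearly independent, because any $\RR$-linear relation $\sum c_k b_k = 0$ with $c_k \in \RR$ is in particular a $\CC$-linear relation, forcing each $c_k$ to vanish.

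Next I would upgrade linear independence to affine independence over $\RR$. Given a finite subset $\{b_0, b_1, \dots, b_n\} \subseteq B$ and real scalars $c_0, \dots, c_n$ with $\sum_k c_k = 0$ and $\sum_k c_k b_k = 0$, the $\RR$-linear independence established in the previous step forces $c_0 = \cdots = c_n = 0$. Hence $B$ is affinely independent when $L_p(\Omega,\mu)$ is considered as a real vector space.

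With this in hand, Corollary~\ref{cor:1}(2) gives that the two-valued set $B$ has strict $p$-negative type, which is the desired conclusion. There is no real obstacle in this argument; the only subtlety worth noting is the passage from $\CC$-linear independence to $\RR$-linear independence in the complex case, and this is a one-line observation.
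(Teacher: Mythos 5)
Your proposal is correct and follows the paper's own (very brief) argument: the paper likewise derives this corollary from Corollary \ref{cor:1}(2) by noting that Schauder bases are linearly independent, hence affinely independent. The extra details you supply (uniqueness of the expansion of $0$, the passage from $\CC$-linear to $\RR$-linear independence, and the upgrade from linear to affine independence) are all sound and simply make explicit what the paper leaves implicit.
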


It is also the case that Corollary \ref{cor:1} (2) prohibits the existence of certain types of isometry.

\begin{corol}\label{cor:2}
Let $B$ be any two-valued subset of $L_{p}(\Omega, \mu)$ that is affinely dependent (when
$L_{p}(\Omega, \mu)$ is considered as a real vector space). Then:
\begin{enumerate}
\item[(1)] No metric space that has strict $p$-negative type is isometric to $B$. In particular, no
ultrametric space is isometric to $B$.

\item[(2)] If $0 < p < q \leq 2$, no subset of any $L_{q}$-space is isometric to $B$.
\end{enumerate}
\end{corol}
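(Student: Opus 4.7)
The plan is to derive both parts directly from Corollary \ref{cor:1}, exploiting the fact that $p$-negative type (strict or not) is a purely metric invariant and therefore transfers across isometries.

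For part (1), I would argue contrapositively. Suppose, for contradiction, that $B$ is isometric to some metric space $(X,d)$ that has strict $p$-negative type. Definition \ref{neg:gen} depends only on the metric, so strict $p$-negative type passes through any isometry; hence $B$ itself has strict $p$-negative type. But $B$ is two-valued and affinely dependent, so Corollary \ref{cor:1}(2) forces $B$ \emph{not} to have strict $p$-negative type, a contradiction. The ultrametric claim is then a special case, since it is a standard fact that every ultrametric space has strict $p$-negative type for every $p > 0$ (for instance via Lemin's isometric embedding of ultrametric spaces into Hilbert space, or via the direct combinatorial arguments in the ultrametric literature). This ultrametric input is the only ingredient not already in the paper, and it is the closest thing to an obstacle here; otherwise the argument is bookkeeping.

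For part (2), I would instead invoke Corollary \ref{cor:1}(3). Since $B$ is affinely dependent and two-valued, $\wp_{B} = p$, so by the very definition of generalized roundness $B$ does \emph{not} have $r$-negative type for any $r > p$; in particular, $B$ fails to have $q$-negative type. On the other hand, Schoenberg's classical result (quoted in the introduction) asserts that whenever $0 < q \leq 2$, every $L_{q}$-space has $q$-negative type, and negative type is trivially inherited by subsets. Thus if $B$ were isometric to a subset of some $L_{q}(\Omega',\mu')$ with $0 < q \leq 2$, then $B$ would have $q$-negative type, contradicting $\wp_{B} = p < q$.

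In summary, both statements reduce to the principle that $p$-negative type transfers across isometries, combined with the sharp information already extracted in Corollary \ref{cor:1}: affine dependence in a two-valued set pins the generalized roundness down to exactly $p$ and kills strict $p$-negative type. No new computations with simplex gaps are required; the only external fact needed is the well-known strict $p$-negative type of ultrametric spaces, which is the one step I would cite from outside the present paper.
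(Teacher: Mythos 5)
Your argument is correct, and part (1) is essentially identical to the paper's: strictness of $p$-negative type is a metric invariant, so it cannot transfer to $B$, which fails it by Corollary \ref{cor:1}(2); the ultrametric claim rests on the external fact that ultrametric spaces have strict $q$-negative type for all $q>0$, which the paper cites as Faver \emph{et al.} \cite[Corollary 5.3]{Fav}. (One caution: your parenthetical justification via Lemin's isometric embedding into Hilbert space does not by itself deliver strict $p$-negative type for all $p>0$ --- an isometric copy inside Hilbert space only gives $q$-negative type for $q\leq 2$, and strictness requires a further argument --- so you do genuinely need the Faver \emph{et al.} result or an equivalent.) For part (2) you take a mildly different route: you use Corollary \ref{cor:1}(3) to pin down $\wp_{B}=p$, conclude that $B$ fails $q$-negative type, and contrast this with Schoenberg's fact that subsets of $L_{q}$-spaces ($q\leq 2$) have $q$-negative type. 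The paper instead argues on the other side of the isometry: a subset $A$ of an $L_{q}$-space has $q$-negative type, hence \emph{strict} $p$-negative type for $p<q$ by Li and Weston \cite[Theorem 5.4]{Hli}, and then part (1) applies. The two directions are equivalent in substance --- both ultimately lean on the Li--Weston theorem, yours through the proof of Corollary \ref{cor:1}(3) --- but your version has the small advantage of not needing to re-invoke that theorem explicitly, while the paper's version reuses part (1) and so keeps the proof self-contained within the corollary.
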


\begin{proof}
$B$ does not have strict $p$-negative type by Corollary \ref{cor:1} (2). So no metric space
that has strict $p$-negative type can be isometric to $B$. The second statement of part (1)
is then immediate because ultrametric spaces have strict $q$-negative type for all $q > 0$
by Faver \textit{et al}.\ \cite[Corollary 5.3]{Fav}.

Now suppose that $0 < p < 2$. If $A$ is a subset of an $L_{q}$-space such that $|A| > 1$
and $q \in (p, 2]$, then $A$ has $q$-negative type. Thus $A$ has strict $p$-negative type
by Li and Weston \cite[Theorem 5.4]{Hli}. Hence $A$ is not isometric to $B$ by part (1). 
\end{proof}

It is possible, and worthwhile, to recast Corollary \ref{cor:1} (2) in terms of $\mathcal{M}(\Omega, \mu)$.
Recall that $\mathcal{M}(\Omega, \mu)$ denotes the vector space of all scalar-valued measurable functions
on $(\Omega, \mu)$. We will say that a function $f \in \mathcal{M}(\Omega, \mu)$ is
\textit{finitely supported} if $\mu (\supp (f)) < \infty$, and that a set $B \subset \mathcal{M}(\Omega, \mu)$
is \textit{finitely supported} if each $f \in B$ is finitely supported.

At the beginning of Section \ref{sec:1} we stated that a set $B \subset L_{p}(\Omega, \mu)$ is
\textit{two-valued} if $|B| > 1$ and the essential range of each $f \in B$ is a subset of $\{ 0,1 \}$.
More generally, we will say that a set $B \subset \mathcal{M}(\Omega, \mu)$ is
\textit{two-valued} if $|B| > 1$ and the essential range of each $f \in B$ is a subset of $\{ 0,1 \}$.
Clearly, the essential range of a function $f \in \mathcal{M}(\Omega, \mu)$ is a subset of $\{ 0,1 \}$
if and only if $f = \chi_{\supp (f)}$ $\mu$-almost everywhere. Consequently, for any $p \in (0, \infty)$
and any function $f \in \mathcal{M}(\Omega, \mu)$ whose essential range is a subset of $\{ 0,1 \}$,
we see that $f \in L_{p}(\Omega, \mu)$ if and only if $f$ is finitely supported. This observation implies
the following useful lemma.

\begin{lemma}\label{lem:2}
Let $B$ be a two-valued set in $\mathcal{M}(\Omega, \mu)$.
Then, for any $p \in (0, \infty)$, the following statements are equivalent:
\begin{enumerate}
\item $B$ is finitely supported.

\item $B$ is a two-valued set in $L_{p}(\Omega, \mu)$.
\end{enumerate}
\end{lemma}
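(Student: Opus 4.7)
The plan is to deduce the lemma directly from the observation made in the paragraph preceding the statement, namely that for any function $f \in \mathcal{M}(\Omega, \mu)$ whose essential range is contained in $\{0,1\}$ and any $p \in (0, \infty)$, membership $f \in L_{p}(\Omega, \mu)$ is equivalent to $f$ being finitely supported. This is a one-line calculation: since $f = \chi_{\supp(f)}$ $\mu$-almost everywhere, one has $\int_{\Omega} |f|^{p} \, \d \mu = \mu(\supp(f))$, so the integral is finite precisely when $\mu(\supp(f)) < \infty$.

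With this in hand, I would prove the two implications separately. For (1) $\Rightarrow$ (2), I assume $B$ is a two-valued set in $\mathcal{M}(\Omega, \mu)$ with each $f \in B$ finitely supported. By the observation above, every $f \in B$ lies in $L_{p}(\Omega, \mu)$. Since $|B| > 1$ and the essential ranges of the elements of $B$ are subsets of $\{0,1\}$, this shows that $B$ is a two-valued set in $L_{p}(\Omega, \mu)$. For (2) $\Rightarrow$ (1), I assume $B$ is a two-valued set in $L_{p}(\Omega, \mu)$. Then each $f \in B$ belongs to $L_{p}(\Omega, \mu)$ and has essential range contained in $\{0,1\}$. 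Applying the observation in the other direction gives $\mu(\supp(f)) < \infty$ for every $f \in B$, which is the statement that $B$ is finitely supported. In both directions the condition $|B| > 1$ transfers between the two notions of two-valued set without comment, since it is part of both definitions.

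There is no real obstacle here: the lemma is essentially a repackaging of the previously noted fact $\|\chi_{\supp(f)}\|_{p}^{p} = \mu(\supp(f))$. The only minor care point is to ensure that in passing between $\mathcal{M}(\Omega, \mu)$ and $L_{p}(\Omega, \mu)$ I am consistently interpreting elements as $\mu$-a.e.\ equivalence classes, so that the essential range and the support are well defined and the hypothesis ``essential range $\subseteq \{0,1\}$'' is preserved. Once that is fixed, the argument is a short deduction and the proof can be written in a few lines.
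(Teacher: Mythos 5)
Your proposal is correct and follows exactly the route the paper intends: the lemma is stated as an immediate consequence of the preceding observation that a function with essential range in $\{0,1\}$ equals $\chi_{\supp(f)}$ $\mu$-a.e., so $\|f\|_{p}^{p} = \mu(\supp(f))$ and membership in $L_{p}(\Omega,\mu)$ is equivalent to finite support. The paper gives no further argument, and your write-up simply makes that deduction explicit.
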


A key feature of Corollary \ref{cor:1} (2) is the strong interplay that it illustrates between
the metric geometry of $L_{p}(\Omega, \mu)$ and the underlying vector space. The property of strict
$p$-negative type depends explicitly upon $p$, whereas the property of affine independence depends
only on the underlying vector space. However, for any $p \in (0, \infty)$, the underlying vector
space is a vector subspace of $\mathcal{M}(\Omega, \mu)$. This is important because $\mathcal{M}(\Omega, \mu)$
is completely independent of $p$. Thus we obtain a generalization of Theorem \ref{hyp:cube}
that is valid for any $p \in (0, \infty)$.

\begin{corol}\label{cor:3}
Let $B$ be a finitely supported two-valued set in $\mathcal{M}(\Omega, \mu)$.
Then, for any $p \in (0, \infty)$, the following statements are equivalent:
\begin{enumerate}
\item $B$ has strict $p$-negative type when viewed as a metric subspace of $L_{p}(\Omega, \mu)$.

\item $B$ is an affinely independent subset of $\mathcal{M}(\Omega, \mu)$
(when $\mathcal{M}(\Omega, \mu)$ is considered as a real vector space).
\end{enumerate}
\end{corol}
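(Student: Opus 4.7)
The plan is to show that Corollary \ref{cor:3} follows by combining Lemma \ref{lem:2} with Corollary \ref{cor:1} (2), with a brief observation about how affine (in)dependence transfers between $L_{p}(\Omega, \mu)$ and $\mathcal{M}(\Omega, \mu)$.

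First, fix $p \in (0, \infty)$ and the finitely supported two-valued set $B \subset \mathcal{M}(\Omega, \mu)$. Applying Lemma \ref{lem:2} to $B$ places $B$ inside $L_{p}(\Omega, \mu)$ as a two-valued set there. This brings Corollary \ref{cor:1} (2) into play, which asserts that $B$ has strict $p$-negative type if and only if $B$ is affinely independent when $L_{p}(\Omega, \mu)$ is considered as a real vector space. Thus (1) is equivalent to affine independence in $L_{p}(\Omega, \mu)$.

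Next I would reconcile the ambient vector space: $L_{p}(\Omega, \mu)$ is a real vector subspace of $\mathcal{M}(\Omega, \mu)$, so a real affine relation $\sum c_{j} f_{j} = 0$ with $\sum c_{j} = 0$ among finitely many $f_{j} \in B$ holds in $L_{p}(\Omega, \mu)$ if and only if it holds in $\mathcal{M}(\Omega, \mu)$ (both assertions mean the same $\mu$-a.e.\ identity of scalar-valued measurable functions). Consequently, affine independence of $B$ in $L_{p}(\Omega, \mu)$ is exactly the same statement as affine independence of $B$ in $\mathcal{M}(\Omega, \mu)$. Chaining the two equivalences yields (1) $\Longleftrightarrow$ (2).

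The proof is essentially a bookkeeping argument; there is no real obstacle, since all of the analytic content sits in Theorem \ref{thm:1}, Corollary \ref{cor:1} and Lemma \ref{lem:2}. The only point that requires a moment's care is the transfer of affine (in)dependence between $L_{p}(\Omega, \mu)$ and $\mathcal{M}(\Omega, \mu)$, and this is precisely the feature that makes the statement attractive: the characterization is $p$-free on the right-hand side, because $\mathcal{M}(\Omega, \mu)$ does not see $p$ at all.
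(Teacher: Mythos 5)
Your proposal is correct and follows exactly the paper's route: the paper's proof is the one-line "Immediate from Corollary \ref{cor:1} and Lemma \ref{lem:2}," with the preceding discussion noting, as you do, that the underlying vector space of $L_{p}(\Omega, \mu)$ is a subspace of $\mathcal{M}(\Omega, \mu)$ so affine independence transfers. Your added care about the $\mu$-a.e.\ identification is a reasonable elaboration of the same argument, not a different approach.
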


\begin{proof}
Immediate from Corollary \ref{cor:1} and Lemma \ref{lem:2}.
\end{proof}

Corollary \ref{cor:3} has interesting outcomes in the finite-dimensional setting of (real) $\ell_{p}^{(n)}$.
In this case, $\mathcal{M}(\Omega, \mu) = \mathbb{R}^{n}$. Letting $d_{p}$ denote the metric on $\mathbb{R}^{n}$
induced by the $p$-norm we deduce the following consequences of Corollary \ref{cor:3}.

\begin{corol}\label{cor:4}
Let $B$ be any two-valued set in $\mathbb{R}^{n}$. If $|B| > n + 1$, then the generalized
roundness of the metric space $(B, d_{p})$ is $p$ for all $p \in (0, \infty)$.
\end{corol}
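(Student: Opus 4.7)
The plan is to reduce Corollary \ref{cor:4} directly to Corollary \ref{cor:1} (3). First, identify $\ell_{p}^{(n)}$ with $L_{p}(\Omega, \mu)$ by taking $\Omega = \{1, \ldots, n\}$ equipped with counting measure, so that $\mathcal{M}(\Omega, \mu) = \mathbb{R}^{n}$. Under this identification, any two-valued subset $B \subseteq \{0,1\}^{n}$ of $\mathbb{R}^{n}$ is automatically a two-valued subset of $L_{p}(\Omega, \mu)$ for every $p \in (0, \infty)$, and the metric $d_{p}$ on $B$ is exactly the one it inherits as a subspace of $L_{p}(\Omega, \mu)$.

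Next I invoke the standard fact from linear algebra that any affinely independent subset of an $n$-dimensional real vector space has at most $n+1$ elements: if $b_{0}, b_{1}, \ldots, b_{k}$ are affinely independent in $\mathbb{R}^{n}$, then $b_{1} - b_{0}, \ldots, b_{k} - b_{0}$ are linearly independent, forcing $k \leq n$. The hypothesis $|B| > n+1$ therefore forces $B$ to be affinely dependent when $\mathbb{R}^{n}$ is considered as a real vector space. Crucially, this property depends only on the underlying real vector space structure of $\mathbb{R}^{n}$, so the same set $B$ is affinely dependent irrespective of $p$.

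Finally, I apply Corollary \ref{cor:1} (3) to the two-valued affinely dependent set $B \subset L_{p}(\Omega, \mu)$ for each fixed $p \in (0, \infty)$ separately, which yields $\wp_{(B, d_{p})} = p$. This is precisely the conclusion of Corollary \ref{cor:4}. There is essentially no substantive obstacle: all of the analytic content was already packaged into Theorem \ref{thm:1} and Corollary \ref{cor:1}, and the only additional ingredient is the elementary dimension bound on affinely independent subsets of $\mathbb{R}^{n}$. The one conceptual point worth emphasizing, which is what makes the statement uniform in $p$, is that affine (in)dependence is a $p$-free property of $B$ inside $\mathcal{M}(\Omega, \mu)$.
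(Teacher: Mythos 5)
Your proposal is correct and follows essentially the same route as the paper: both arguments rest on the dimension count showing that $|B| > n+1$ forces affine dependence, together with the machinery of Corollary \ref{cor:1}. The only cosmetic difference is that you cite Corollary \ref{cor:1} (3) directly, whereas the paper re-traces that part's proof by combining Corollary \ref{cor:1} (1), Corollary \ref{cor:3}, and the Li--Weston theorem.
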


\begin{proof}
Let $B = \{ \mathbf{x}_{0}, \mathbf{x}_{1}, \ldots, \mathbf{x}_{k} \}$ be a given two-valued set in
$\mathbb{R}^{n}$ such that $k > n$. Let $p \in (0, \infty)$ be given. By Corollary \ref{cor:1} (1),
$(B, d_{p})$ has $p$-negative type. However, the vectors
$\{ \mathbf{x}_{1} - \mathbf{x}_{0}, \mathbf{x}_{2} - \mathbf{x}_{0}, \ldots, \mathbf{x}_{k} - \mathbf{x}_{0} \}$
are linearly dependent because $k > n$. So it follows from Corollary \ref{cor:3} that
$(B, d_{p})$ does not have strict $p$-negative type. This precludes $(B, d_{p})$ from having $q$-negative type
for any $q > p$ by Li and Weston \cite[Theorem 5.4]{Hli}. Thus $\wp_{(B, d_{p})} = p$, as asserted.
\end{proof}

\begin{corol}\label{cor:5}
Let $B$ be any two-valued set in $\mathbb{R}^{n}$ and let $(X, d)$ be a $k$-point metric space
that has strict $p$-negative type for some $p \in (0, \infty)$. If $(X, d)$ embeds isometrically
into $(B, d_{p})$, then $n \geq k - 1$.
\end{corol}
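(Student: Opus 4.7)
The plan is to reduce Corollary \ref{cor:5} to an immediate application of Corollary \ref{cor:1} (2), after observing that strict $p$-negative type is an isometric invariant and that affine independence in $\mathbb{R}^{n}$ imposes a cardinality bound.

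First, I would assume that $(X,d)$ embeds isometrically into $(B, d_{p})$ via some map $\phi : X \to B$, and set $B' = \phi (X) \subseteq B$. Since $|X| > 1$ (as $(X,d)$ has strict $p$-negative type) and $\phi$ is an isometry, $B'$ has exactly $k$ elements and, viewed as a metric subspace of $(\mathbb{R}^{n}, d_{p})$, is isometric to $(X,d)$. Strict $p$-negative type is defined purely in terms of the metric (Definition \ref{neg:gen}), so it is preserved under isometry; hence $(B', d_{p})$ has strict $p$-negative type. Note also that $B'$ is a two-valued subset of $\mathbb{R}^{n}$, since every element of $B'$ lies in $B$ and has coordinates in $\{ 0,1\}$.

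Next, I would invoke Corollary \ref{cor:1} (2), identifying $L_{p}(\Omega,\mu)$ with $\ell_{p}^{(n)}$ in the present finite-dimensional setting. Because $B'$ is two-valued and has strict $p$-negative type in $\ell_{p}^{(n)}$, the corollary forces $B'$ to be an affinely independent subset of $\mathbb{R}^{n}$ (when $\mathbb{R}^{n}$ is regarded as a real vector space). Enumerating $B' = \{ \mathbf{y}_{0}, \mathbf{y}_{1}, \ldots, \mathbf{y}_{k-1}\}$, affine independence means that the $k-1$ vectors $\mathbf{y}_{1} - \mathbf{y}_{0}, \ldots, \mathbf{y}_{k-1} - \mathbf{y}_{0}$ are linearly independent in $\mathbb{R}^{n}$. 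This forces $k - 1 \leq n$, which is exactly the desired inequality $n \geq k - 1$.

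There is essentially no obstacle in this argument beyond bookkeeping: the nontrivial content has already been absorbed into Corollary \ref{cor:1} (2) (and, through it, Theorem \ref{thm:1} and Theorem \ref{thm:B}). The only point that warrants a sentence of justification in the write-up is the preservation of strict $p$-negative type under isometry, which is immediate from the purely metric form of Definition \ref{neg:gen}. Everything else is a standard linear-algebraic observation about affinely independent subsets of $\mathbb{R}^{n}$.
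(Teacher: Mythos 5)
Your proposal is correct and follows essentially the same route as the paper: pass to the image of the embedding, note it is a two-valued subset of $\mathbb{R}^{n}$ with strict $p$-negative type, deduce affine independence (the paper cites Corollary \ref{cor:3}, which in this finite-dimensional setting is just Corollary \ref{cor:1} (2) as you use it), and conclude $k \leq n+1$. No substantive difference.
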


\begin{proof}
Let $B$ be a given two-valued set in $\mathbb{R}^{n}$ and let $(X, d)$ be a $k$-point metric
space that has strict $p$-negative type for some $p \in (0, \infty)$. If $(X, d)$ isometrically
embeds into $(B, d_{p})$, then there is a (necessarily two-valued) subset $\tilde{X}$ of $B$
such that $(\tilde{X}, d_{p})$ has strict $p$-negative type. Thus $\tilde{X}$ is an affinely
independent subset of $\mathbb{R}^{n}$ by Corollary \ref{cor:3}. Therefore $k = |\tilde{X}| \leq n + 1$.
\end{proof}

We remark that Corollary \ref{cor:4} and Corollary \ref{cor:5} generalize similar results for the
Hamming cube $\{ 0,1 \}^{n} \subset \ell_{1}^{(n)}$ given in Murugan \cite{Mur}.

\begin{remark}
It is worth noting that the results of this section hold for any $\{ \alpha, \beta \}$-valued
subset $B$ of $L_{p}(\Omega, \mu)$, $\alpha \not= \beta$. Indeed, we may assume that $\alpha = 0$
by a translation. Then, given any $p > 0$ and any $\{ 0, \beta\}$-valued simplex $D = [x_{j}(m_{j});y_{i}(n_{i})]_{s,t}$
in $L_{p}(\Omega, \mu)$, it follows that
\begin{eqnarray*}
\gamma_{p}(D) & = & |\beta|^{p-2} \cdot \biggl\| \sum\limits_{j} m_{j}x_{j} - \sum\limits_{i} n_{i}y_{i} \biggl\|_{2}^{2}
\end{eqnarray*}
by slightly modifying the statement and proof of Lemma \ref{lemma:1}. So, for any $p \in (0, \infty)$, we see that
$\gamma_{p}(D) = 0$ if and only if the simplex $D$ is balanced. For example, by Corollary \ref{cor:1}, it then
follows that the classical system $B$ of Walsh functions in $L_{p}[0,1]$ forms a set of strict $p$-negative type.
\end{remark}

We conclude this paper by commenting briefly on the case $p = \infty$. If $B$ is a two-valued subset of
$\mathcal{M}(\Omega, \mu)$, then it is automatically a subset of $L_{\infty}(\Omega, \mu)$.
Moreover, as a metric subspace of $L_{\infty}(\Omega, \mu)$, $B$ is an ultrametric space.
Therefore $B$ has strict $p$-negative type for all $p \in (0, \infty)$ by Faver \textit{et al}.\
\cite[Corollary 5.3]{Fav}.

\section*{Acknowledgments}

I am indebted to the referees for their thoughtful comments on the preliminary version of this paper.
The outcome is a far more general and appealing paper. I would also like to thank the CCRDS
research group in the Department of Decision Sciences at the University of South Africa for their
kind support and insightful input during the preparation of this paper.

\end{document}